\def\Re{\mathop{\rm Re}\nolimits}
\newcommand{\ph}{\varphi}
\let\T\intercal
\def\(#1\){{\left(#1\right)}}
\def\hlink#1{\hbox{\hyperlink{#1}{#1}}}
\def\Hlink#1#2{\hbox{\hyperlink{#1}{#2}}}
\def\p2{{p_2}}
\def\e1{{e_1}}
\def\Re{\mathop{\rm Re}\nolimits}
\DeclareMathAlphabet{\mathssbf}{OT1}{cmss}{bx}{n}
\let\ph\varphi
\def\hlink#1{\hbox{\hyperlink{#1}{#1}}}
\newtheorem{statement}{Statement}
\journalname{Journal of Scientific Computing}
\begin{document}

\title{Schur Decomposition for Stiff Differential Equations\thanks{
Financial support for this work was provided by grants RES0043585 and
RES0046040 from the Natural Sciences and Engineering Research Council
of Canada.}}
%\subtitle{Do you have a subtitle?\\ If so, write it here}

%\titlerunning{Short form of title}        % if too long for running head

\author{Thoma Zoto \and John C. Bowman}

%\authorrunning{Short form of author list} % if too long for running head

\institute{T. Zoto \at
  RAPSODI, Inria Lille--Nord Europe,
  Villeneuve d'Ascq 59650, France\\
  \email{thomazoto1@gmail.com}
%             \emph{Present address:} of F. Author  %  if needed
           \and
           J.C. Bowman\at
           Department of Mathematical and Statistical Sciences,
           University of Alberta, Edmonton, Alberta T6G 2G1, Canada\\
           \email{bowman@ualberta.ca}
}

\date{Submitted: May 13, 2023}
%\date{Received: / Accepted:}
% The correct dates will be entered by the editor

\maketitle

\begin{abstract}
  A quantitative definition of numerical stiffness for initial value problems
is proposed. Exponential integrators can
effectively integrate linearly stiff systems, but they become expensive
when the linear coefficient is a matrix, especially when the time step
is adapted to maintain a prescribed local error.
Schur decomposition is shown to avoid the need for computing matrix
exponentials in such simulations, while still circumventing linear stiffness.
\keywords{exponential integrators \and stiff differential equations
  \and numerical stiffness \and Schur decomposition \and Runge--Kutta methods}
\subclass{65L04 \and 65L06 \and 65M22}
\end{abstract}

\section{Introduction}
The time integration of initial value problems is ubiquitous in
simulations of physical phenomena. Consider a first-order initial
value problem of the form
\begin{equation}
    \label{eq-LF}
    \frac{dy}{dt} = f(t,y(t)) = F(t,y(t)) - L y, \qquad y(0)=y_0,
\end{equation}
where $y$ is a vector, $F$ is an analytic function, and $L$ is a constant
matrix. Numerical approximations of a future estimate $y_{n+1}$
can be obtained using an explicit Runge--Kutta (RK) method:
\begin{equation}
    \label{eq-RK}
    y_n^{i+1} = y_n^0 +h\sum_{j=0}^ia_{ij}f(t_n+c_jh,y_n^j)
                \quad i=0,\dots,s-1,
\end{equation}
where $n=0,1,\ldots$, $y_0^0=y_0$, $y_n^s=y_{n+1}$, $h$ is the {\it time step\/},
$t_n=nh$, $a_{ij}$ are the Runge--Kutta {\it weights\/}, and $c_j$ are the
{\it step fractions\/} for stage $j$. 
For $n \ge 1$, $y_n^0=y_{n-1}^s$ is the approximation of the solution
at time $nh$, also denoted by $y_n$.
%In the above notation, $y_n^s$ is the approximation of the solution
%at time $(n+1)h$, also denoted by $y_{n+1}$.
It is customary to organize the weights in a
{\it Butcher tableau\/} (Table~\ref{t-RK}).
\begin{table}[htbp]
\[
\renewcommand\arraystretch{1.3}
\begin{array}
{c|cccccc}
0\\
c_1
    & a_{00}\\
c_2 
    & a_{10}
    & a_{11}\\
\vdots
    & \vdots
    & \vdots
    & \ddots\\
c_{s-1}
    & a_{(s-2)0}
    & \cdots
    & \cdots
    & a_{(s-2)(s-2)}\\
\hline
1
    & a_{(s-1)0}
    & \cdots
    & \cdots
    & a_{(s-1)(s-1)}\\
\end{array}
\]
    \caption{General Runge--Kutta tableau}
    \label{t-RK}
\end{table}
For some problems, explicit Runge--Kutta methods may require a very
small time step. This failure is often called
numerical stiffness, and is described and defined in
Section~\ref{sec-stiffness}. While one might consider
implicit methods, they require iteration within a time step.
Exponential Runge--Kutta (ERK) integrators provide an alternative to
implicit methods for solving stiff problems.
These are explicit methods that alleviate the burden of stiffness
and have similar structure to explicit Runge--Kutta methods
(a notable difference being that the weights are not constants, but
depend on the matrix $L$):
\begin{equation}
    \label{eq-ERK}
    y_n^{i+1} = e^{-h L}y_n^0 +h\sum_{j=0}^ia_{ij}(-hL) F(t_n+c_jh,y_n^j)
                \quad i=0,\dots,s-1.
\end{equation}
We briefly describe these
methods in Section~\ref{exp-integrators}. In Section~\ref{schur}, we
show how Schur decomposition can be used to improve the efficiency of
exponential integrators when~$L$ is a nondiagonal matrix.
We conclude the paper with some numerical examples and applications in
Section~\ref{applications}.

\section{Stiffness of Explicit Methods}
\label{sec-stiffness}

We focus on solving ordinary dif\-ferential equations (ODEs) or
systems of ODEs of the form \eqref{eq-LF}.
In practical applications, such systems of ODEs arise upon
spatial discretization of a PDE via finite differences, finite elements,
or spectral transforms.

One of the most common ways of quantifying stiffness in the
literature is the concept of the \textit{stiffness ratio}. If we
denote $\lambda_{\min}$ and $\lambda_{\max}$ to be the smallest and the
largest eigenvalues
of $L$ (in modulus), then the stiffness ratio is
\begin{equation}
    \frac{\mid{\Re \lambda_{\max}\mid}}{\mid{\Re \lambda_{\min}\mid}}.
\end{equation}
The larger this ratio is, the more stiff the system is considered to be.
However, many authors have realized that this is not the best
definition for the phenomena because, if $\lambda_{\min}$ is zero, then
the stiffness ratio is infinite, but the problem may not be stiff at
all. Lambert describes a few other attempts to define stiffness based on
stability, accuracy, or decay rates, 
although none of them are satisfactory, either due to the existence of
a counterexample or due to their qualitative rather than quantitative
nature \cite{Lambert91}.
One statement that Lambert seems to accept (and one that has also has
been used consistently in the literature alongside the stiffness
ratio) is:
\begin{statement}
    \label{stiffness-Lambert}
    If a numerical method with a finite region of absolute stability,
    applied to a system with any initial conditions, is forced to use
    in a certain interval of integration a step-length which is
    excessively small in relation to the smoothness of the exact
    solution in that interval, then the system is said to be stiff in
    that interval.
\end{statement}
This is a helpful definition if we want to just know by testing
whether a system is
stiff or not, but it requires actually applying a method and observing
whether it fails.
Typically, stiff systems are solved numerically by first
trying an explicit method and, if that fails for reasonable
step sizes, switching to
an implicit method. Understood in this way, Statement~\ref{stiffness-Lambert} portrays stiffness as a property that
depends on the chosen numerical method and not as an intrinsic
phenomenon of the ODE system itself. We want to define stiffness
so that it depends only on the ODE system and
is helpful for applying exponential integrators, distinguishing between
stiffness coming from a linear term and stiffness coming from a
nonlinear term.

In order to gain some insights about stiffness, let us first explore
how explicit and implicit methods solve ODEs.
We recall the two simplest time integrating schemes:
the explicit Euler method $y_{n+1}=y_n + hf(y_n)$ and implicit Euler
method $y_{n+1}=y_n + hf(y_{n+1})$.
In Figure~\ref{fig-curves-tang}, we present a graphical description of
why the explicit Euler method performs poorly
when $F(t,y(t))=0$, $L=20$ and $y_0=1$ in~\eqref{eq-LF}.
At each time $t_n$, these methods compute an approximation to the
exact solution that lies on a nearby solution curve. 
The implicit Euler method evolves the
solution in the direction of the tangent line
to the nearby solution curve at the point $(t_{n+1},y_{n+1})$
(red segment), while the explicit Euler method evolves in the
direction of the tangent line to the nearby solution curve at the
point $(t_n,y_n)$ (green segment). 
The tangent line at $(t_{n+1},y_{n+1})$ is much 
closer to the direction of the exact solution at the point
$(t_n,y(t_n))$, unlike the tangent line at $(t_n,y_n)$.
The bigger the step size, the more aligned the tangent line at
$(t_{n+1},y_{n+1})$ will be with the direction of the exact solution
at the point $(t_n,y(t_n))$, in contrast to the 
tangent line at $(t_n,y_n)$.
This phenomenon causes the explicit Euler method
to work only for sufficiently small step sizes.
The corresponding slope field is shown in Figure~\ref{fig-field-E}.
\begin{figure}[h]
\begin{subfigure}{0.49\linewidth}
    \centering
    \includegraphics[width=0.9\linewidth]{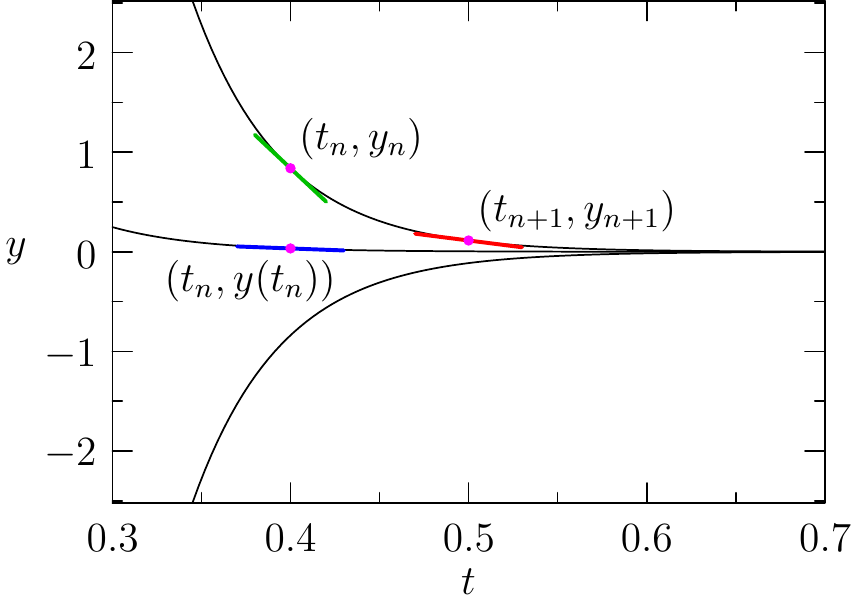}
    \caption{Graphical interpretation of explicit and implicit Euler methods.}
    \label{fig-curves-tang}
\end{subfigure}
\,
\begin{subfigure}{0.49\textwidth}
    \centering
    \includegraphics[width=0.9\linewidth]{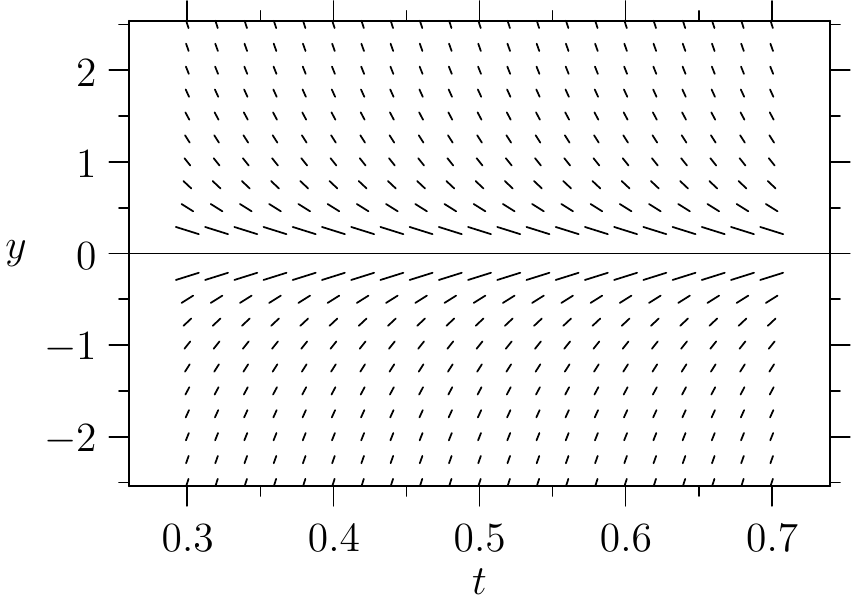}
    \caption{Slope field of $\displaystyle\frac{dy}{dt}=-20y$.}
    \label{fig-field-E}
\end{subfigure}
\end{figure}

This point of view on stiffness can also be extended to general
systems of ODEs. It was first
introduced by \cite{Curtiss52} and described by
\cite{Lambert91}. We will expand on these ideas to make
them applicable to the stiff differential
equations that we aim to solve with exponential integrators.
We borrow the following two systems from \cite{Lambert91}:
\begin{align}
  \hypertarget{System 1}\ 
    \text{System 1:}\nonumber\\
    \begin{bmatrix}
        y_1'\\
        y_2'
    \end{bmatrix}
    &=
    \begin{bmatrix}
        -2 & 1\\
        1 & -2
    \end{bmatrix}
    \begin{bmatrix}
        y_1\\
        y_2
    \end{bmatrix}
    +
    \begin{bmatrix}
        2\sin(t)\\
        2(\cos(t) - \sin(t))
    \end{bmatrix}
    , \quad
    \begin{bmatrix}
        y_1(0)\\
        y_2(0)
    \end{bmatrix}
    =
    \begin{bmatrix}
        2\\
        3
    \end{bmatrix},
\end{align}
\begin{align}
  \hypertarget{System 2}\ 
    \text{System 2:}\nonumber\\
    \begin{bmatrix}
        y_1'\\
        y_2'
    \end{bmatrix}
    &=
    \begin{bmatrix}
        -2 & 1\\
        998 & -999
    \end{bmatrix}
    \begin{bmatrix}
        y_1\\
        y_2
    \end{bmatrix}
    +
    \begin{bmatrix}
        2\sin(t)\\
        999(\cos(t) - \sin(t))
    \end{bmatrix}
    , \quad
    \begin{bmatrix}
        y_1(0)\\
        y_2(0)
    \end{bmatrix}
    =
    \begin{bmatrix}
        2\\
        3
    \end{bmatrix}.
\end{align}
The particular solution for the given initial conditions is the same
for~\hlink{System 1} and~\hlink{System 2}:
\begin{equation}
    \label{soln-23}
    \begin{bmatrix}
        y_1(t)\\
        y_2(t)
    \end{bmatrix}
    = 2\exp(-t)
    \begin{bmatrix}
        1\\
        1
    \end{bmatrix}
    +
    \begin{bmatrix}
        \sin(t)\\
        \cos(t)
    \end{bmatrix}.
\end{equation}
Lambert selected the initial
condition $y(0) = [2,3]^\T$, so that~\hlink{System 1} and
\hlink{System 2} have the same exact solution,
to emphasize that the concept of stiffness does
not depend on the particular solution.
In Figure~\ref{fig-soln-curves}
we plot the components as well as the phase curves
corresponding to various initial conditions
for both systems, allowing one to examine nearby solutions.
\begin{figure}
\centering
\begin{subfigure}{.5\textwidth}
    \centering
    \includegraphics[width=\linewidth,height=0.2\textheight,keepaspectratio]{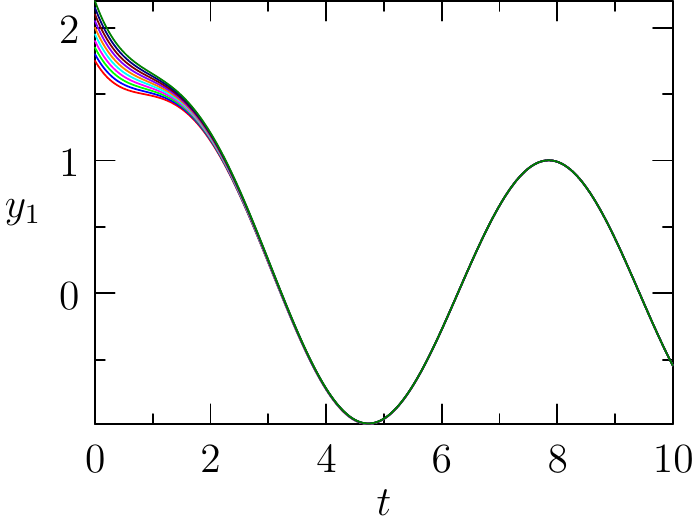}
    \caption{$y_1(t)$ solution curves for \protect\hlink{System 1}.}
    \label{fig-y1-P1}
\end{subfigure}%
\begin{subfigure}{.5\textwidth}
    \centering
    \includegraphics[width=\linewidth,height=0.2\textheight,keepaspectratio]{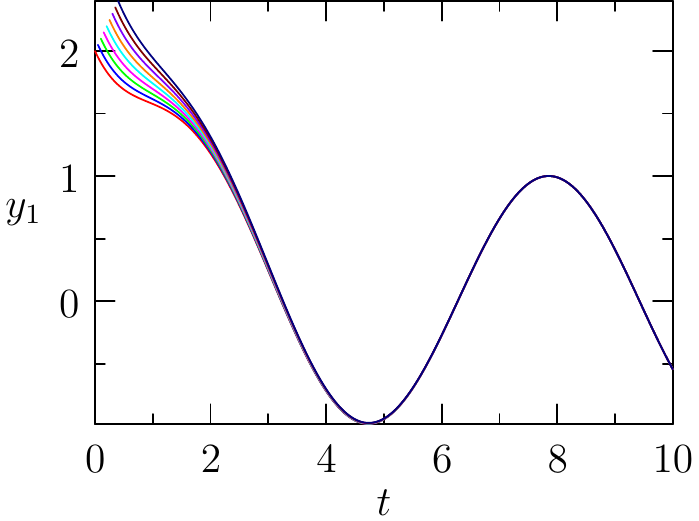}
    \caption{$y_1(t)$ solution curves for \protect\hlink{System 2}.}
    \label{fig-y1-P2}
\end{subfigure}%
\\[\smallskipamount]
\begin{subfigure}{.5\textwidth}
    \centering
    \includegraphics[width=\linewidth,height=0.2\textheight,keepaspectratio]{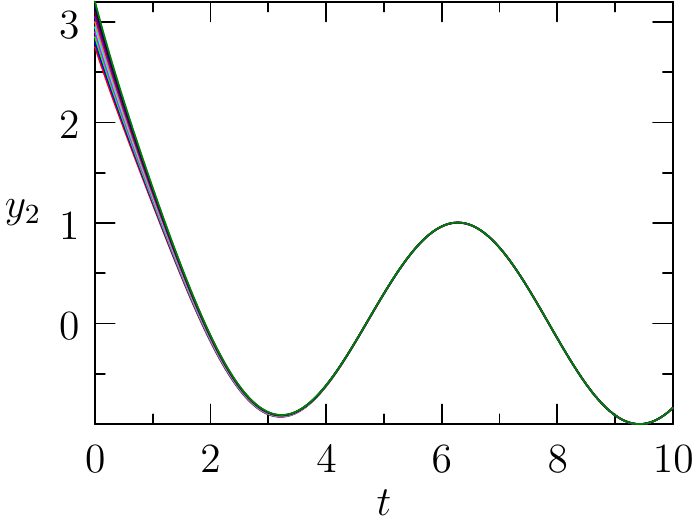}
    \caption{$y_2(t)$ solution curves for \protect\hlink{System 1}.}
    \label{fig-y2-P1}
\end{subfigure}%
\begin{subfigure}{.5\textwidth}
    \centering
    \includegraphics[width=\linewidth,height=0.2\textheight,keepaspectratio]{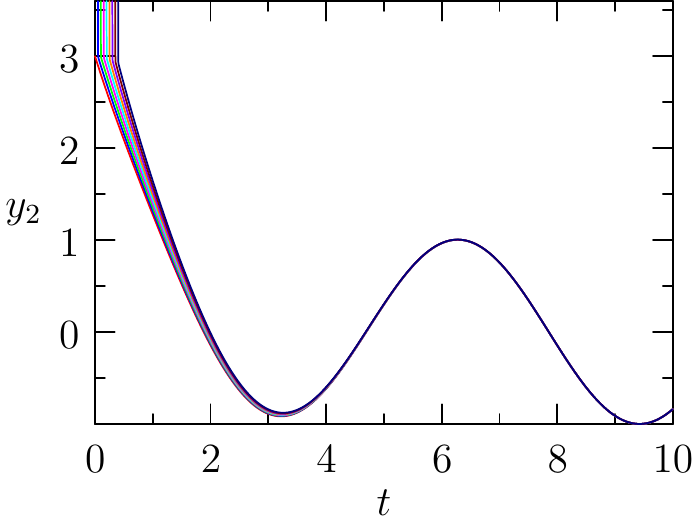}
    \caption{$y_2(t)$ solution curves for \protect\hlink{System 2}.}
    \label{fig-y2-P2}
\end{subfigure}%
\\[\smallskipamount]
\begin{subfigure}{.5\textwidth}
    \centering
    \includegraphics[width=\linewidth,height=0.2\textheight,keepaspectratio]{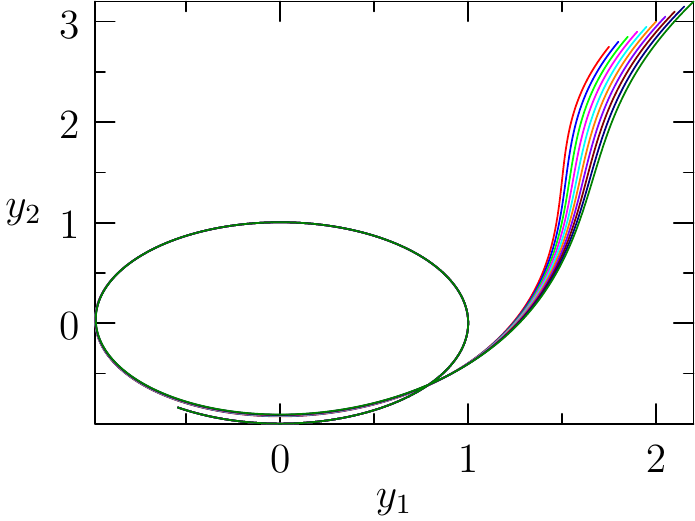}
    \caption{Phase curves for \protect\hlink{System 1}.}
    \label{fig-phase-P1}
\end{subfigure}%
\begin{subfigure}{.5\textwidth}
    \centering
    \includegraphics[width=\linewidth,height=0.2\textheight,keepaspectratio]{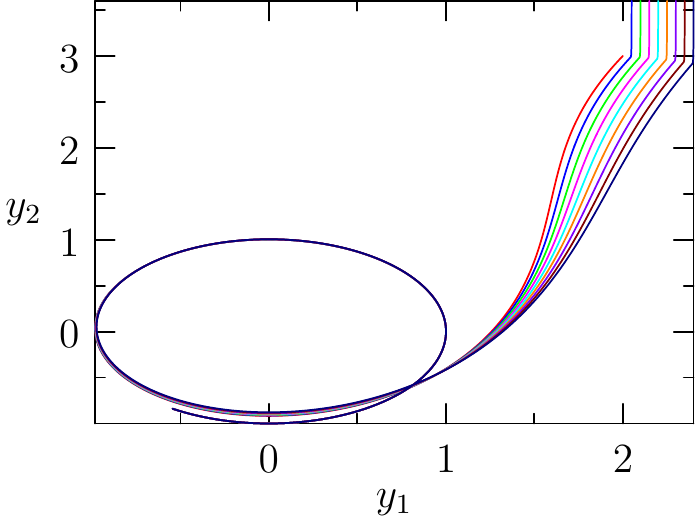}
    \caption{Phase curves for \protect\hlink{System 2}.}
    \label{fig-phase-P2}
\end{subfigure}%
\caption{Plots of nearby solution curves for \protect\hlink{System 1} and \protect\hlink{System 2}.}
\label{fig-soln-curves}
\end{figure}

\begin{figure}
\centering
\begin{subfigure}{.5\textwidth}
    \centering
    \includegraphics[width=\linewidth,height=0.2\textheight,keepaspectratio]{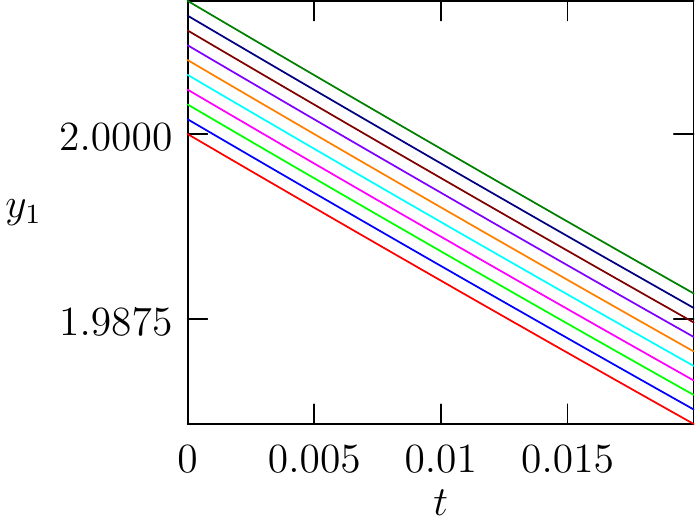}
    \caption{$y_1(t)$ curves for \protect\hlink{System 1} (zoomed).}
    \label{fig-y1-P1-zoom}
\end{subfigure}%
\begin{subfigure}{.5\textwidth}
    \centering
    \includegraphics[width=\linewidth,height=0.2\textheight,keepaspectratio]{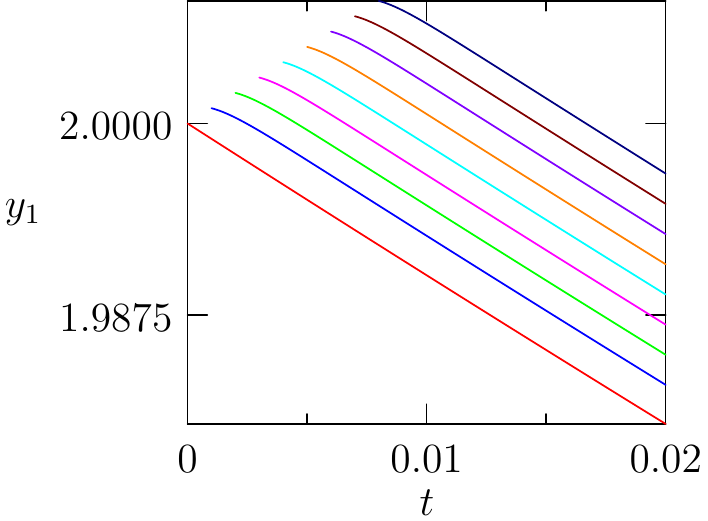}
    \caption{$y_1(t)$ curves for \protect\hlink{System 2} (zoomed).}
    \label{fig-y1-P2-zoom}
\end{subfigure}%
\\[\smallskipamount]
\begin{subfigure}{.5\textwidth}
    \centering
    \includegraphics[width=\linewidth,height=0.2\textheight,keepaspectratio]{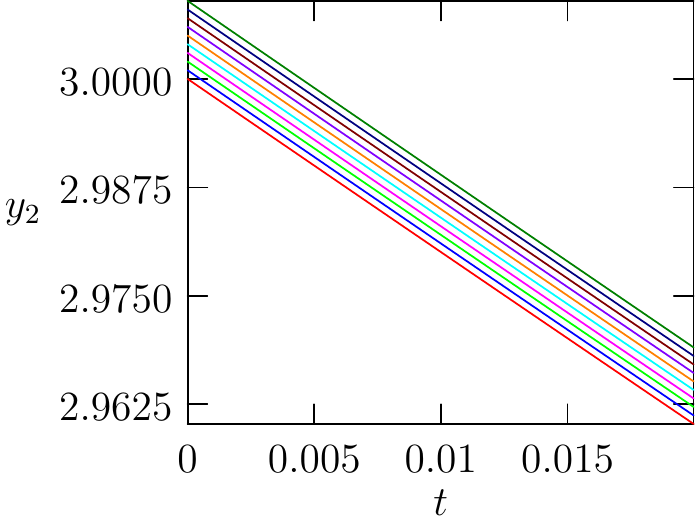}
    \caption{$y_2(t)$ curves for \protect\hlink{System 1} (zoomed).}
    \label{fig-y2-P1-zoom}
\end{subfigure}%
\begin{subfigure}{.5\textwidth}
    \centering
    \includegraphics[width=\linewidth,height=0.2\textheight,keepaspectratio]{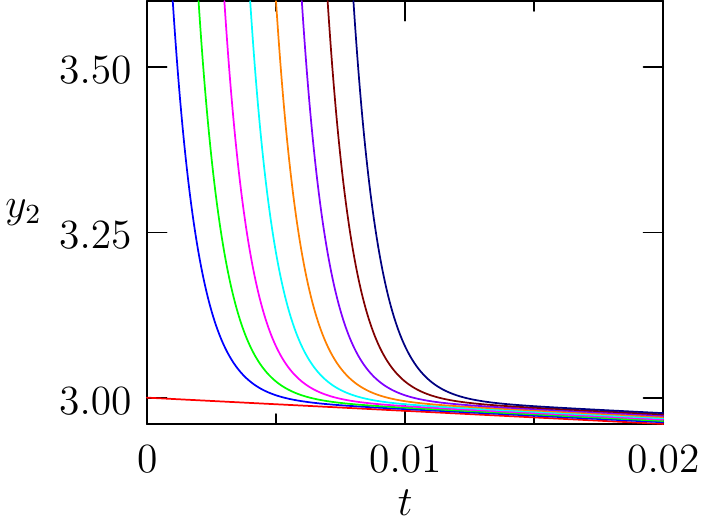}
    \caption{$y_2(t)$ curves for \protect\hlink{System 2} (zoomed).}
    \label{fig-y2-P2-zoom}
\end{subfigure}%
\\[\smallskipamount]
\begin{subfigure}{.5\textwidth}
    \centering
    \includegraphics[width=\linewidth,height=0.2\textheight,keepaspectratio]{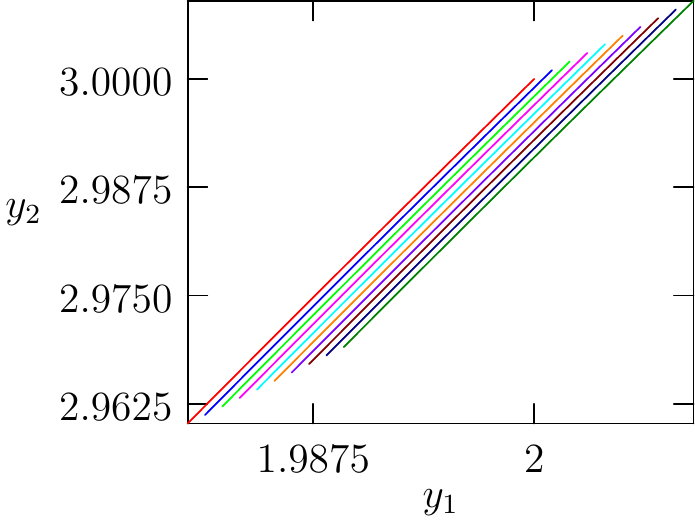}
    \caption{Phase curves for \protect\hlink{System 1} (zoomed).}
    \label{fig-phase-P1-zoom}
\end{subfigure}%
\begin{subfigure}{.5\textwidth}
    \centering
    \includegraphics[width=\linewidth,height=0.2\textheight,keepaspectratio]{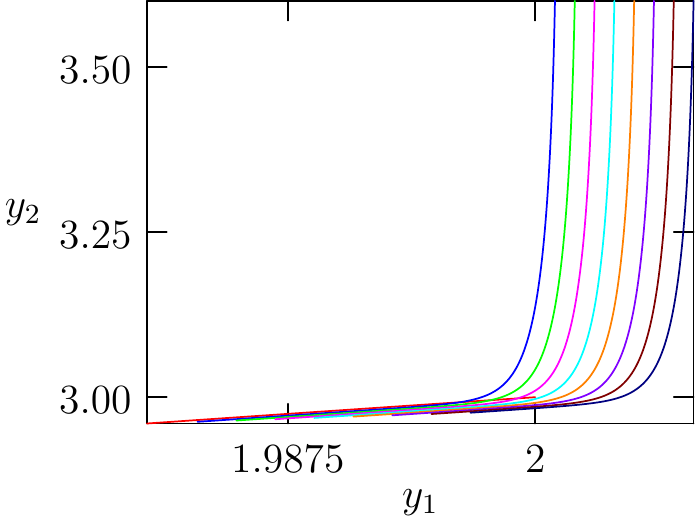}
    \caption{Phase curves for \protect\hlink{System 2} (zoomed).}
    \label{fig-phase-P2-zoom}
\end{subfigure}%
\caption{Plots of nearby solution curves for \protect\hlink{System 1} and \protect\hlink{System 2} zoomed in.}
\label{fig-soln-curves-zoom}
\end{figure}
%Lambert suggests zooming in on some interval, say
%$[0,0.001]$, as in Figure~\ref{fig-y2-P1-zoom} and~\ref{fig-y2-P2-zoom}, so we
%can better observe the rate at which the nearby curves of the
%second component of the solutions for these systems approach each
%other.
%We notice that the nearby curves of the second component of System 2 shown in
%Figure~\ref{fig-y2-P2-zoom} approach
As suggested by Lambert, 
to help us generalize the discussion of stiffness from the case of a
single ODE, 
%and~\ref{fig-phase-P2-zoom},
we zoom in on the
initial evolution in Figure~\ref{fig-soln-curves-zoom}.
While the nearby solutions for $y_2$ are seen
in Figure~\ref{fig-y2-P1-zoom} to be nearly parallel
to each other for (nonstiff)~\hlink{System 1},
they are seen in Figure~\ref{fig-y2-P2-zoom}
to approach each other at a steep angle for (stiff) \hlink{System 2}.
This anomaly can also be observed in the phase curves of
\hlink{System 2} in~\ref{fig-phase-P2-zoom}.

For example, for some $t_n$, the exact value of the solution for
\hlink{System 2} is
$y(t_n)=[1.998, 2.99]$, while the numerical algorithm estimates it
with some error as $y_n=[1.998,3.01]$. The difference
between $y(t_n)$ and $y_n$ is negligible, but the difference
between the slope field vector at $y(t_n)$ and the slope field
vector at $y_n$ is large (see Figure~\ref{fig-field-P2-zoom}). In contrast,
the corresponding vectors in Figure~\ref{fig-field-P1-zoom} are
closely aligned. Since these vectors form
the right-hand side of the ODE at $y(t_n)$ and $y_n$,
we see that~\hlink{System 2}
can pose problems for explicit Runge--Kutta methods.
A common but inefficient remedy is to reduce the time step so that the
vectors $f(t_n,y_n)$ and $f(t_n,y(t_n))$ become more aligned.

\begin{figure}
\centering
\begin{subfigure}{.5\textwidth}
    \centering
    \includegraphics[width=\linewidth]{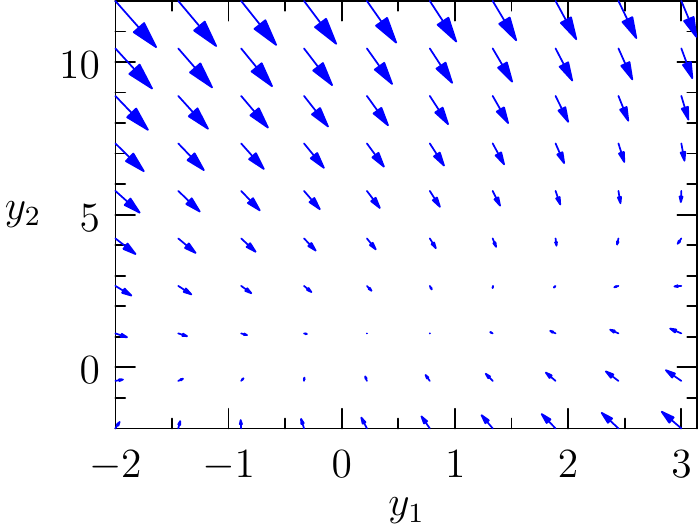}
    \caption{Slope field of \protect\hlink{System 1}.}
    \label{fig-field-P1}
\end{subfigure}%
\begin{subfigure}{.5\textwidth}
    \centering
    \includegraphics[width=\linewidth]{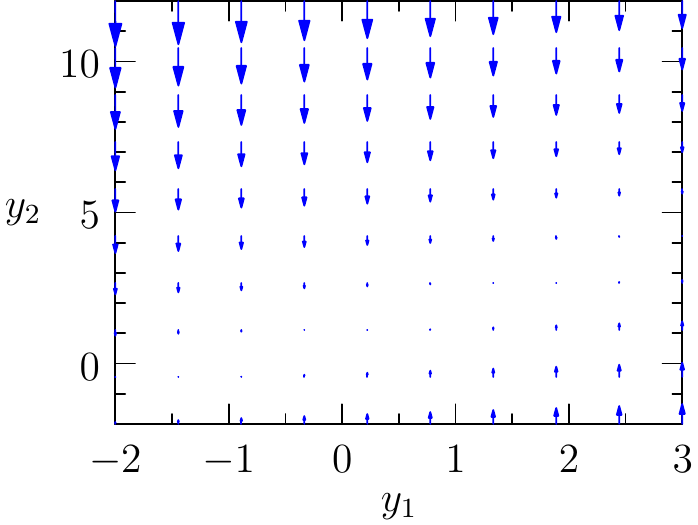}
    \caption{Slope field of \protect\hlink{System 2}.}
    \label{fig-field-P2}
\end{subfigure}%
\\[\smallskipamount]
\begin{subfigure}{.5\textwidth}
    \centering
    \includegraphics[width=\linewidth]{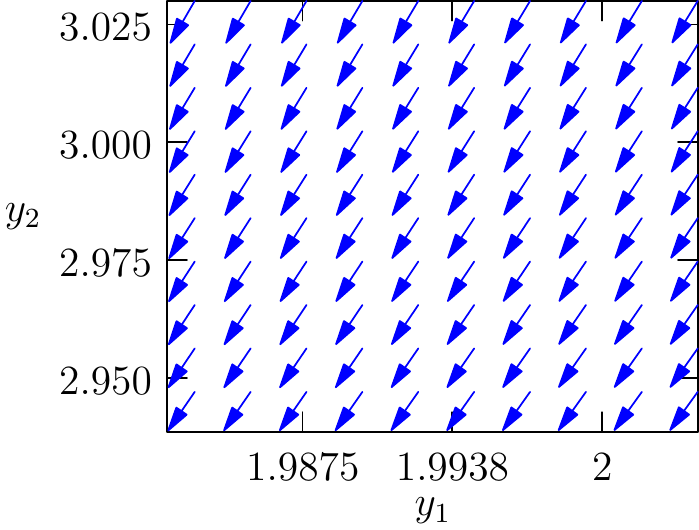}
    \caption{Slope field of \protect\hlink{System 1} (zoomed).}
    \label{fig-field-P1-zoom}
\end{subfigure}%
\begin{subfigure}{.5\textwidth}
    \centering
    \includegraphics[width=\linewidth]{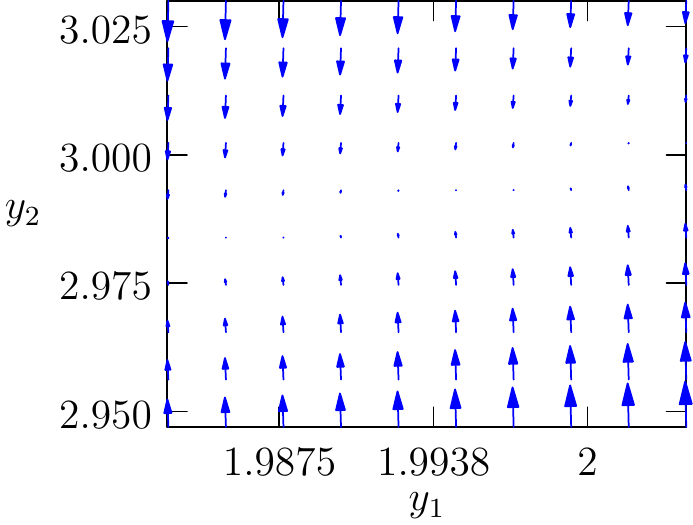}
    \caption{Slope field of \protect\hlink{System 2} (zoomed).}
    \label{fig-field-P2-zoom}
\end{subfigure}%
\caption{Slope field plots for \protect\hlink{System 1} and \protect\hlink{System 2}.}
\label{fig-fields}
\end{figure}

We can now investigate the last statement considered by Lambert
\cite{Lambert91} and attributed to Curtiss and Hirschfelder
\cite{Curtiss52},
which will eventually lead us to our ultimate definition of stiffness.
\begin{statement}
    \label{stiffness}
    A system is said to be stiff in a given interval of time if, in
    that interval, the neighbouring solution curves approach the
    solution curve at a rate which
    is very large in comparison with the rate at which the solution
    varies.
\end{statement}
As Lambert points out for Statement~\ref{stiffness-Lambert},
Statement~\ref{stiffness} also includes the idea that the
stiffness of a system will depend on where in the phase space
the numerical integration takes place.
The reason why Lambert does not adopt
Statement~\ref{stiffness} is that it requires
knowledge of at least two solutions of the
system in order to decide whether nearby curves approach
the desired solution curve at a fast or slow rate. 
Based on the connection we made to the slope field, however, we can
translate the geometric phenomenon of nearby curves approaching at a
fast rate to the more analytic interpretation that the function
$f(t,y)$ has a large Lipschitz constant \cite{Curtiss52}:
\begin{equation*}
    \sup_{\genfrac{}{}{0pt}{2}{t_1\neq t_2}{y_1\neq y_2}} \frac{\abs{f(t_1,y_1) - f(t_2,y_2)}}
                                {\abs{(t_1,y_1) - (t_2,y_2)}}.
\end{equation*}
Essentially, stiff systems are those for which ``a small
change in $y$ leads to a large change in $f(t,y)$''
\cite{Lambert91}. Lambert argues against this statement
as well since it is not apparent
what critical value the Lipschitz constant should
compared to.
The original interest in defining stiffness was solely to
avoid wasting limited resources in solving stiff
systems with explicit methods. However, we are interested in refining
the definition of stiffness further to allow us to specifically detect
linear stiffness (numerical stiffness coming from the linear source term
of the ODE system), so that we can apply exponential
integrators as needed.

Curtiss and Hirschfelder~\cite{Curtiss52} attribute stiffness in a
one-dimensional system
to a drastic change in the slope field across a particular solution
curve. Generalizing this idea to two dimensions,
we consider the equation
\begin{equation}
    \label{eq-diff-y^2}
    \begin{bmatrix}
        y_1'\\
        y_2'
    \end{bmatrix}
    =
    \begin{bmatrix}
        -2 & 1\\
        998 & -999
    \end{bmatrix}
    \begin{bmatrix}
        y_1\\
        y_2
    \end{bmatrix}
    +
    \begin{bmatrix}
        y_1^2\\
        y_2^2
    \end{bmatrix}.
\end{equation}
The solution lives in
a three-dimensional space, with the axes being $t$, $y_1$, and $y_2$.
We first identify the {\it fixed curves\/}
of the components of the system of ODEs~\eqref{eq-diff-y^2}: denote by
$C_1$ the curve corresponding to
$dy_1/dt=0$ and by $C_2$ the curve corresponding to $dy_2/dt=0$.
Pick one of them, say $C_2$. Fix $y_1$ and calculate the
corresponding $y_2$ from the equation for $C_2$. Consider some test
values
$\tilde y_{2a} \in (y_2-\varepsilon ,y_2)$ and
$\tilde y_{2b} \in (y_2, y_2+\varepsilon)$ for some sufficiently
small $\varepsilon$.
The system is stiff if the
slope field vector at $(y_1,\tilde y_{2a})$ or $(y_1,\tilde y_{2b})$
is not aligned with the slope field vector at $(y_1,y_2)$.
We repeat the process for $C_1$.
The equations for $C_1$ and $C_2$ are
\begin{align}
    &C_1:\quad 0 = -2y_1 + y_2 + y_1^2,\\
    &C_2:\quad 0 = 998y_1 - 999y_2 + y_2^2.
\end{align}
We plot these curves and the slope field in Figure~\ref{fig-fields-ysq}.
We note in this case that stiffness manifests itself only in some
parts of the phase space: \emph{stiffness can be a local phenomenon.}
\begin{figure}
\centering
\begin{subfigure}{.5\textwidth}
    \centering
    \includegraphics[width=\linewidth]{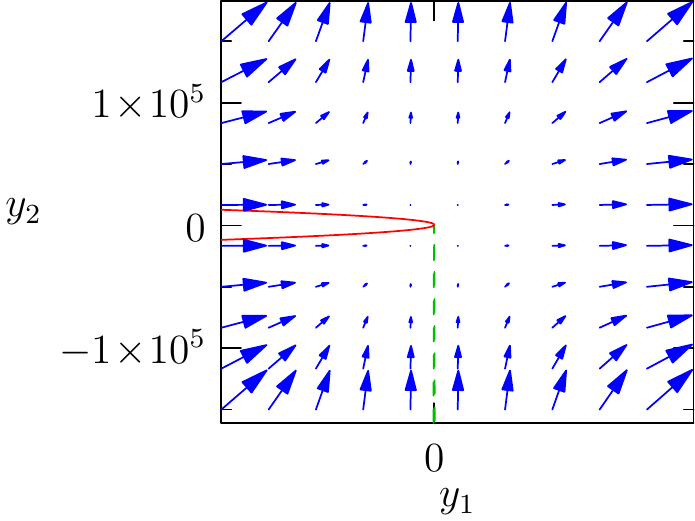}
    \caption{Slope field of~\eqref{eq-diff-y^2}.}
    \label{fig-field-ysq}
\end{subfigure}%
\begin{subfigure}{.5\textwidth}
    \centering
    \includegraphics[width=\linewidth]{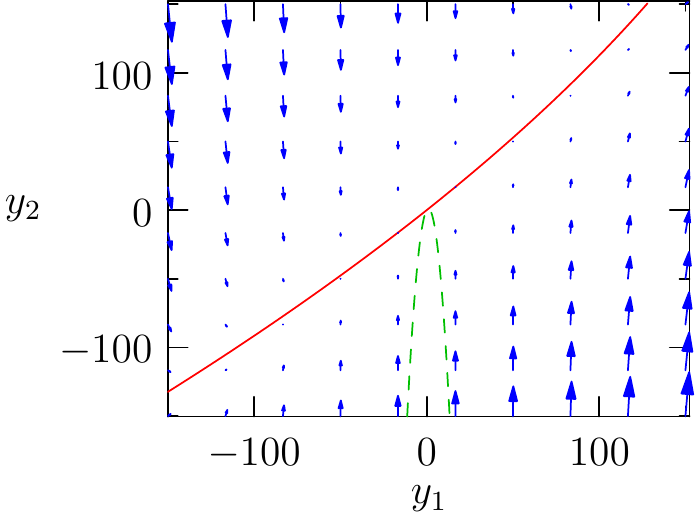}
    \caption{Slope field of~\eqref{eq-diff-y^2} zoomed.}
    \label{fig-field-ysq-zoom}
\end{subfigure}%
\caption{Slope field plots of~\eqref{eq-diff-y^2} showing the fixed
  curves $C_1$ (dashed green) and $C_2$ (solid red).}
\label{fig-fields-ysq}
\end{figure}

In contrast, the system obtained by removing the nonlinear terms from~\eqref{eq-diff-y^2},
\begin{equation}
    \label{eq-diff-exp}
    \begin{bmatrix}
        y_1'\\
        y_2'
    \end{bmatrix}
    =
    \begin{bmatrix}
        -2 & 1\\
        998 & -999
    \end{bmatrix}
    \begin{bmatrix}
        y_1\\
        y_2
    \end{bmatrix},
\end{equation}
becomes stiff for every initial condition.
Here, the equations for the curves $C_1$ and $C_2$ are
\begin{align}
    &C_1:\quad 0 = -2y_1 + y_2,\\
    &C_2:\quad 0 = 998y_1 - 999y_2;
\end{align}
these are plotted together with the corresponding slope field in Figure~\ref{fig-fields-exp}.
\begin{figure}
\centering
\begin{subfigure}{.5\textwidth}
    \centering
    \includegraphics[width=\linewidth]{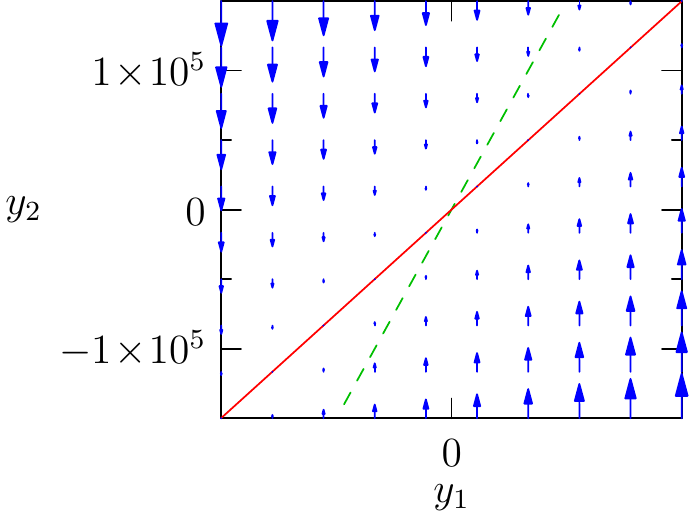}
    \caption{Slope field of~\eqref{eq-diff-exp}.}
    \label{fig-field-exp}
\end{subfigure}%
\begin{subfigure}{.5\textwidth}
    \centering
    \includegraphics[width=\linewidth]{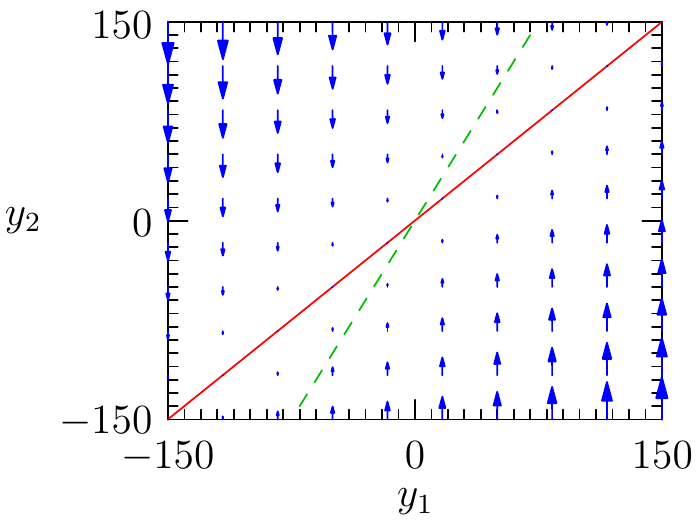}
    \caption{Slope field of~\eqref{eq-diff-exp} zoomed.}
    \label{fig-field-exp-zoom}
\end{subfigure}%
\caption{Slope field plots of~\eqref{eq-diff-exp} showing the fixed
  curves $C_1$ (dashed green) and $C_2$ (solid red).}
\label{fig-fields-exp}
\end{figure}
This is another confirmation that stiffness cannot be measured by
properties of the linear operator only and makes it
crucial to have a definition that can recognize if the equation
is stiff or not in the phase space region of interest.

The idea of testing the behaviour of the source function near fixed
curves provides a fast but heuristic way of determining if an equation
is stiff in a particular region.
However, Cartwrite has proposed an alternative, more theoretical,
definition of stiffness based on local Lyapunov exponents and
curvature \cite{Cartwright99}.
Although he is mostly concerned with defining stiffness
in chaotic systems, part of his work aligns well with the
discussion here.
In fact, we consider Cartwright's
definition to be the preferred definition of stiffness as it is
local and provides a way of quantifying the rate at which nearby
solution curves approach the exact solution.

Stiff systems can be recognized by how fast 
nearby solutions approach a fixed curve and, as mentioned
above, what characterizes the rate of separation or contraction
of solution curves of a system of ODEs are the
Lyapunov exponents.
Since Lyapunov exponents give a global picture of the
phase space, to investigate what happens locally
we will need \emph{local Lyapunov exponents}. Let
$\sigma_i(t)$ be the principal axes of an ellipsoidal ball
evolving in time in phase space. The $i^{th}$ local Lyapunov
exponent is
\begin{equation}
    \gamma_i(\tau,t)=\lim_{\sigma_i(\tau)\rightarrow 0}
                    \frac{1}{\tau}\log \frac{\sigma_i(t+\tau)}{\sigma_i(t)}.
\end{equation}
We hinted earlier that while nearby solutions play a major role in detecting
stiffness in a system of ODEs, the wiggliness of the
solution curve at the section of interest in the phase space also
plays an important role. Cartwright quantifies the latter by using
the curvature $\kappa = y''{(1+y'^2)^{-3/2}}$ of the solution $y$.
Cartwright states
that ``a system is stiff in a given interval if in that interval the
most negative local Lyapunov exponent is large, while the
curvature of the solution is small'' \cite{Cartwright99}.
He quantifies stiffness by the ratio
\begin{equation}
  R_{nl}=\frac{\displaystyle \left|\min_{1\leq i\leq n}\gamma_i(\tau,t)\right|}{\kappa(t)},
\end{equation}
but he does not compare the ratio to anything and hence leaves it
open to Lambert's argument against Statement~\ref{stiffness}.

Cartwright also points out that the ratio $R_{nl}$ could
be averaged over the
trajectory to yield a global measure of stiffness, but we have already
seen that stiffness can be a local phenomenon.
Therefore, we slightly modify Cartwright's definition to
\begin{definition}
    \label{def-stiffness}
    A system is \emph{stiff} in a given interval if in that interval the
    most negative local Lyapunov exponent is much larger in absolute
    value than the curvature of the solution curve.
\end{definition}

In the remainder of this work, we focus on
systems of ODEs of the form \eqref{eq-LF},
such that the stiffness in the sense of
Definition~\ref{def-stiffness} stems mostly from the
term~$Ly$.

\section{Exponential Integrators}\label{exp-integrators}
We now introduce a class of methods known as \emph{exponential integrators},
first encountered in \cite{Certaine60}, that avoid linear
stiffness by treating the linear term exactly. 

Returning to~\eqref{eq-LF}, we define the function $G(t)=F(t,y(t))$,
and introduce the integrating factor $t\mapsto e^{tL}$.
As shown in \cite{Zoto23embed}, a change of both independent and
dependent variables allows us to write the exact solution of~\eqref{eq-LF} as
\begin{equation}
    \label{eq-FL-sol-ph}
    y(t_n+h) = e^{-hL}y(t_n)
            + \sum_{k=0}^\infty 
            h^{k+1} \ph_{k+1}(-hL) G^{(k)}(t_n),
\end{equation}
where
\begin{align}
    \ph_k(0) &= \frac{1}{k!},\\
    \ph_0(x) &= e^x,\\
    \ph_{k+1}(x) &= \frac{\ph_k(x)-\frac{1}{k!}}{x} \text{ for }
                    k \geq 0.
\end{align}
Note that~\eqref{eq-FL-sol-ph} agrees  with equation (4.6) of
Ref.~\cite{Hochbruck05} obtained by applying the variation-of-constants
method to \eqref{eq-LF} and Taylor expanding $G$.

Exponential Runge--Kutta methods approximate the infinite sum and
derivatives of $G$ in
\eqref{eq-FL-sol-ph}. The simplest approximation,
called the exponential Euler method, truncates the sum
after the first term:
\begin{equation}
    y(t_n+h) = \ph_0(-hL)y_n + h\ph_1(-hL)F(t_n,y_n).
\end{equation}
The exponential Euler method solves~\eqref{eq-LF}
exactly whenever $F(t,y)$ is constant but reduces to the explicit
Euler method in the \emph{classical limit} $L\to 0$.

As shown by Hochbruck and Ostermann \cite{Hochbruck05},
great care must be taken when deriving a higher-order exponential
integrator
to ensure that it retains its design order when applied to
stiff problems. Hochbruck and Ostermann demonstrate that
several fourth-order exponential integrators in the literature
exhibit an order reduction when applied to a particular test problem.
For example, the stiff order of the scheme ETD4RK of Cox and Matthews
\cite{Cox02} can drop from four to two.
For consistency of nomenclature, we refer to this
method~\hlink{ERK4CM} and give its Butcher tableau in Table~\ref{t-ERK4CM}.
\begin{table}[htbp]
  \hypertarget{ERK4CM}\
\[
\renewcommand\arraystretch{1.2}
\begin{array}
{c|cccc}
0\\
\frac{1}{2}
    & \frac{1}{2}\ph_1\(-\frac{hL}{2}\)\\
\frac{3}{4}
    & 0
    & \frac{1}{2}\ph_1\(-\frac{hL}{2}\)\\
1
    & \frac{1}{2}\ph_1\(-\frac{hL}{2}\)\Bigl(\ph_0\(-\frac{hL}{2}\)
                                                        -1\Bigr)
    & 0
    & \ph_1\(-\frac{hL}{2}\)\\
\hline
1
    & \ph_1-3\ph_2+4\ph_3
    & 2\ph_2-4\ph_3
    & 2\ph_2-4\ph_3
    & 4\ph_3-\ph_2
\end{array}
\]
    \caption{ERK4CM tableau, where $\ph_i=\ph_i(-hL)$.}
    \label{t-ERK4CM}

\end{table}

Similarly, the exponential integrator of Krogstad \cite{Krogstad05} in
Table~\ref{t-ERK4K}, which we denote~\hlink{ERK4K}, can suffer an
order reduction from four to three.

\begin{table}[htbp]
  \hypertarget{ERK4K}\
\[
\renewcommand\arraystretch{1.2}
\begin{array}
{c|cccc}
0\\
\frac{1}{2} 
    & \frac{1}{2}\ph_1\(-\frac{hL}{2}\)\\
\frac{1}{2} 
    & \frac{1}{2}\ph_1\(-\frac{hL}{2}\)-\ph_2\(-\frac{hL}{2}\)
    & \ph_2\(-\frac{hL}{2}\)\\
1
    & \ph_1-2\ph_2
    & 0
    & 2\ph_2\\
\hline
1
    & \ph_1-3\ph_2+4\ph_3
    & 2\ph_2-4\ph_3
    & 2\ph_2-4\ph_3
    & 4\ph_3-\ph_2
\end{array}
\]
    \caption{ERK4K tableau, where $\ph_i = \ph_i(-hL)$.}
    \label{t-ERK4K}
\end{table}

Hochbruck and Ostermann \cite{Hochbruck05}
derived a set of stiff-order conditions that are sufficient to
prevent such order reductions. They propose the five-stage method
shown in Table~\ref{t-ERK4HO5}, which we denote~\hlink{ERK4HO5}.

\begin{table}[htbp]
  \hypertarget{ERK4HO5}\
\[
\renewcommand\arraystretch{1.2}
\begin{array}
{c|ccccc}
0\\
\frac{1}{2} 
    & \frac{1}{2}\ph_1\(-\frac{hL}{2}\)\\
\frac{1}{2} 
    & \frac{1}{2}\ph_1\(-\frac{hL}{2}\) - \ph_2\(-\frac{hL}{2}\)
    & \ph_2\(-\frac{hL}{2}\)\\
1
    & \ph_1-2\ph_2
    & \ph_2
    & \ph_2\\
\frac{1}{2}
    & \frac{1}{2}\ph_1\(-\frac{hL}{2}\)-2a_{31}-a_{33}
    & a_{31}
    & a_{31}
    & \frac{1}{4}\ph_2\(-\frac{hL}{2}\)-a_{31} \\
\hline
1
    & \ph_1-3\ph_2+4\ph3
    & 0
    & 0
    & -\ph_2+4\ph_3
    & 4\ph_2-8\ph_3\\
\end{array}
\]
\begin{align*}
     \ph_i &= \ph_i(-hL),\\
     a_{31} &= \frac{1}{2}\ph_2\(-\frac{hL}{2}\)-\ph_3
     +\frac{1}{4}\ph_2 - \frac{1}{2}\ph_3\(-\frac{hL}{2}\).
\end{align*}
    \caption{ERK4HO5 tableau.}
    \label{t-ERK4HO5}
\end{table}

State-of-the-art numerical methods use adaptive time-stepping
to efficiently allocate computational resources. These methods adjust
the time step to keep the estimated local error within prescribed
bounds. The error estimate is computed as the difference between high-
and low-order approximations. Embedded methods, which share some of the
sample function evaluations between the two approximations, can yield
very efficient adaptive time stepping schemes. As pointed out in
Ref.~\cite{Zoto23embed}, it is important that embedded method be
\emph{robust:} the order of the low-order approximation should never
equal the order $n$ of the high-order approximation for any function~$G(t)$
with a nonzero derivative of order less than $n$.
A robust fourth-order embedded exponential integrator
called~\hlink{ERK43ZB} is presented in Table~\ref{t-ERK43ZB}~\cite{Zoto23embed}.

\begin{table}
\[
\renewcommand\arraystretch{1.2}
\begin{array}
{c|cccccc}
0\\
\frac{1}{6}
    & \frac{1}{6}\ph_1\(-\frac{hL}{6}\)\\
\frac{1}{2}
    & \frac{1}{2}\ph_1\(-\frac{hL}{2}\) - a_{11}
    & a_{11}\\
\frac{1}{2}
    & \frac{1}{2}\ph_1\(-\frac{hL}{2}\)-a_{21}-a_{22}
    & a_{21}
    & a_{22}\\
\hline
1
    & \ph_1-a_{31}-a_{32}-a_{33}
    & a_{31}
    & a_{32}
    & a_{33} \\
1
    & \ph_1-\frac{67}{9}\ph_2+\frac{52}{3}\ph_3
    & 8\ph_2-24\ph_3
    & \frac{26}{3}\ph_3-\frac{11}{9}\ph_2
    & a_{43}
    & a_{44}
\end{array}
\]
\begin{align*}
    \ph_i  &= \ph_i(-hL)\\
    a_{11} &= \frac{3}{2}\ph_2\(-\frac{hL}{2}\)
        + \frac{1}{2}\ph_2\(-\frac{hL}{6}\)\\
    a_{21} &= \frac{19}{60}\ph_1 + \frac{1}{2}\ph_1\(-\frac{hL}{2}\)
    +\frac{1}{2}\ph_1\(-\frac{hL}{6}\)\\
           &+2\ph_2\(-\frac{hL}{2}\)
           +\frac{13}{6}\ph_2\(-\frac{hL}{6}\)
    +\frac{3}{5}\ph_3\(-\frac{hL}{2}\)\\
    a_{22} &= -\frac{19}{180}\ph_1
    - \frac{1}{6}\ph_1\(-\frac{hL}{2}\)
    -\frac{1}{6}\ph_1\(-\frac{hL}{6}\)\\
            &-\frac{1}{6}\ph_2\(-\frac{hL}{2}\)
    +\frac{1}{9}\ph_2\(-\frac{hL}{6}\)
    -\frac{1}{5}\ph_3\(-\frac{hL}{2}\)\\
    a_{33} &= \ph_2 + \ph_2\(-\frac{hL}{2}\) - 6\ph_3
            - 3\ph_3\(-\frac{hL}{2}\)\\
    a_{31} &= 3\ph_2 - \frac{9}{2}\ph_2\(-\frac{hL}{2}\)
    - \frac{5}{2}\ph_2\(-\frac{hL}{6}\) + 6a_{33} + a_{21}\\
    a_{32} &= 6\ph_3 + 3\ph_3\(-\frac{hL}{2}\)
    -2a_{33}+a_{22}\\
    a_{43} &= \frac{7}{9}\ph_2 - \frac{10}{3}\ph_3,\qquad
    a_{44} = \frac{4}{3}\ph_3 - \frac{1}{9}\ph_2
\end{align*}
    \caption{ERK43ZB tableau.}
    \label{t-ERK43ZB}
  \hypertarget{ERK43ZB}\
\end{table}

\section{Schur decomposition}\label{schur}
Exponential integrators are invariant under the transformation
of~\eqref{eq-LF} to the 
autonomous form of the equation (where~$f$ does not have an
explicit time dependence), by introducing a new independent variable:
\begin{equation}
    \label{eq-LFS}
    \frac{dy}{dt} = F(y)-Ly.
\end{equation}
Applying an exponential Runge--Kutta method to such a system
requires the evaluation of $\ph_k(c_jhL)$ for certain values of $k$
and $j$. Since $L$ is assumed to be a general matrix, these
functions are related to the matrix exponential.
Methods for efficiently calculating
such matrix functions, and the matrix exponential in particular,
are an active field of research.
In our testing, we chose to implement a scaling and squaring algorithm,
followed by a Pad\'e approximant,
for calculating the matrix exponential \cite{Moler03}\cite{Higham08}.
Although slow, this is a reliable technique for calculating
the $\ph_k$ functions for a general matrix $L$. In particular cases,
special properties of $L$ may be used to devise more computationally
efficient methods. For example, if $L$ is a
sparse matrix, it is worth implementing a Krylov subspace method.
A short description of Krylov subspace methods (as well as other
methods such as Chebyshev methods, Leja interpolation, and contour
integrals) in relation to exponential RK methods is given in
\cite{Hochbruck10}. Instead of examining existing methods for computing matrix exponentials in greater detail, we propose a transformation to the equation such that the matrix in the term that is
treated exactly by the ERK method is diagonal.

An important practical application of exponential integrators are
PDEs containing a linear term $Ly$, where $L$ is a Laplacian. If
spectral transforms are used to 
convert spatial derivatives to algebraic expressions, the Laplacian
becomes a diagonal matrix.
Calculating the exponential of a diagonal matrix is straightforward to implement and computationally inexpensive.
To avoid loss of accuracy due to finite numerical precision,
truncations of Taylor series should be used when
evaluating $\ph_k(x)$ near $0$ for $k > 0$ \cite{Bowman05}.

Other applications use finite differences to approximate the Laplacian
as a nondiagonal discretized spatial operator. In these cases, the
difficulty of accurately computing the various matrix $\ph_k$ functions has
discouraged many researchers from using adaptive exponential integrators.
Recognizing the computational advantages of the diagonal case,
it would seem reasonable when $L$ is diagonalizable to
compute a one-time change of basis that diagonalizes $L$;
that basis can then be reused for computing matrix functions of $c_jhL$
for arbitrary values of $h$.
However, diagonalization is well known to become numerically unstable
when eigenvalues coalesce.
Moreover, not all matrices are diagonalizable.
Instead of trying to diagonalize $L$, one can find
its Schur decomposition
\begin{equation}
    L=UTU^\dag,
\end{equation}
\let\dag\dagger
where $U^\dag$ denotes the conjugate transpose of the \emph{unitary} matrix
$U$ (so that $U^\dag=U^{-1}$) and $T$ is an upper triangular matrix.
Furthermore, we can write $T=D+S$, where~$D$ is a diagonal matrix and
$S$ is a strictly upper triangular matrix. Equation~\eqref{eq-LFS}
becomes
\begin{equation}
    \frac{dy}{dt} + U(D+S)U^\dag y = F(t,y).
\end{equation}
On multiplying by $U^\dag$ on the left we obtain
\begin{equation}
    \frac{d(U^\dag y)}{dt} + (D+S)U^\dag y = U^\dag F(t,y),\\
\end{equation}
or, in terms of the transformed variable $Y=U^\dag y$, 
\begin{equation}
    \frac{dY}{dt} + DY = U^\dag F(t,UY) - SY,\label{eq-DSF}
\end{equation}
By applying this transformation, we avoid
working with exponentials of a full matrix in favour of
exponentials of a diagonal matrix.
The main trade-off is that we have to compute the Schur
decomposition for the matrix $L$, but that is only done once
and the longer the interval for the time integration, the
more worthwhile this investment becomes.
The second drawback is that some part that could have been treated
exactly is now treated numerically and this could contribute
to the overall error. In addition, we have to do two matrix
multiplications at each step because the nonlinearity $F(t,y)$ is
evaluated in the initial space. Although the efficiency gained in
calculating the $\ph_k$ functions is more than enough to
compensate for the drawback of two added multiplications per step
and the potential of added numerical error, there is another
advantage to implementing exponential RK methods in this way. The
$\ph_k$ functions are now diagonal matrices and can thus be stored
as vectors. This is a large improvement in memory usage as even
for sparse matrices $L$, the matrix $\ph_k$ functions are
general full matrices requiring extra storage. Furthermore,
the ERK methods (and classical RK methods) work by multiplying the
weights of the method by previously computed approximations of the
vector $y$. In the case of ERK methods, the weights are linear
combinations of matrix functions and hence matrices themselves. By
implementing the Schur decomposition and being able to work with
weights that are diagonal matrices, we have replaced all the needed
matrix-vector multiplications with computationally cheap vector
dot products.

With the optimization afforded by Schur decomposition, the use of
embedded ERK methods for step size adjustment becomes computationally viable,
even when $L$ is a nondiagonal matrix.
An adaptive exponential method requires recalculating the weights
(and corresponding $\ph_k$ functions) every time that the step size
is adjusted. However, since these are now functions of diagonal
matrices, there is no longer a huge computational cost to bear.
As in the
case of fixed step size, the Schur decomposition of $L$ only
needs to be performed once, so depending on the duration of
the integration, the cost of the decomposition will
typically be negligible.

Since many matrices encountered in practice are normal,
the following result shows in these cases that the Schur decomposition
technique not only removes linear stiffness from the problem, but
will still handle the linear term exactly (since $S=0$).
\begin{theorem}
    The triangle matrix in the Schur decomposition of a normal matrix is diagonal.
\end{theorem}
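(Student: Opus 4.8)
The plan is to exploit the fact that normality is preserved under unitary similarity, and then to show that the only normal upper triangular matrices are diagonal. First I would verify that the triangular factor $T$ inherits normality from $L$. Writing $L = UTU^\dag$ with $U$ unitary, so that $L^\dag = UT^\dag U^\dag$, a direct computation gives $LL^\dag = UTT^\dag U^\dag$ and $L^\dag L = UT^\dag T U^\dag$. Since $L$ is normal, $LL^\dag = L^\dag L$; multiplying on the left by $U^\dag$ and on the right by $U$ and using $U^\dag U = I$ yields $TT^\dag = T^\dag T$, so that $T$ is itself normal.

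The heart of the argument is then to show that a normal upper triangular matrix must be diagonal. I would do this by comparing the diagonal entries of $TT^\dag$ and $T^\dag T$ and arguing inductively down the rows. Writing $T = (t_{ij})$ with $t_{ij} = 0$ whenever $i > j$, the $(k,k)$ entry of $TT^\dag$ is the row sum $\sum_{j \geq k} |t_{kj}|^2$, while the $(k,k)$ entry of $T^\dag T$ is the column sum $\sum_{j \leq k} |t_{jk}|^2$. Normality forces these two quantities to coincide for every index $k$.

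Taking $k = 1$, the row sum $|t_{11}|^2 + \cdots + |t_{1n}|^2$ must equal the column sum $|t_{11}|^2$, which immediately forces all off-diagonal entries $t_{1j}$ with $j > 1$ in the first row to vanish. Proceeding inductively, once rows $1, \dots, k-1$ are known to have no off-diagonal entries, the column sum at index $k$ collapses to $|t_{kk}|^2$; equating this with the row sum $\sum_{j \geq k} |t_{kj}|^2$ then forces the remaining entries of row $k$ to vanish. Carrying the induction through all rows shows that $T$ is diagonal, which is precisely the claim.

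I expect the only delicate point to be the bookkeeping in the induction, namely keeping careful track of which entries are already known to be zero when simplifying the column sum at each stage. Everything else reduces to the elementary observation that a sum of nonnegative real terms can equal a single one of its summands only if all the remaining terms are zero.
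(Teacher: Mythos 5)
Your proof is correct and follows essentially the same route as the paper: both arguments first show that $T$ inherits normality from $L$ by conjugating the identity $LL^\dag = L^\dag L$ with the unitary factor, and then invoke the fact that a normal upper triangular matrix must be diagonal. The only difference is that the paper defers this last step to a citation (``an inductive argument'' from the literature), whereas you carry out that induction explicitly by comparing the diagonal entries of $TT^\dag$ and $T^\dag T$ row by row, which makes your version self-contained.
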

\begin{proof}
    Assume $L$ is a \emph{normal} matrix:
    \begin{equation}
        LL^\dag = L^\dag L.
    \end{equation}
    The \emph{Schur decomposition} of $L$ and $L^\dag$ are
    \begin{equation}
        L=U^\dag T U \quad \text{ and }
        \quad L^\dag = U^\dag T^\dag U,
    \end{equation}
    where $T$ is a triangular matrix and $U$ is a \emph{unitary} matrix,
    so that $U^\dag = U^{-1}$.
    Hence
    \begin{equation}
        U^\dag T U U^\dag T^\dag U = U^\dag T^\dag U U^\dag T U,
    \end{equation}
    which reduces to
    \begin{equation}
        T T^\dag = T^\dag T.
    \end{equation}
    This means that the triangular matrix $T$ resulting from the
    Schur decomposition of~$L$ is normal. An inductive argument
    shows that it must then be diagonal \cite{Prasolov94}.
\end{proof}

In the general case, the strictly upper triangular matrix $S$ resulting
from the Schur decomposition will be non-zero. We now show
that the term $Sy$ does not incorporate any of the
stiffness inherent in the linear term $Ly$.
On defining the integrating factor $I(t)=e^{tD}$ and
\def\yt{\Tilde y}
\def\St{\Tilde S}
$\Tilde y(t)=I(t)y(t)$, we can transform \eqref{eq-DSF} in the
autonomous case to
\begin{equation}
    \label{eq-S}
    \frac{d\yt}{dt}=I(t)U^\dag F(UI^{-1}(t)\yt)-\St\yt,
\end{equation}
where $\St=I(t)S I^{-1}(t)$ is an $m{\times}m$ strictly upper
triangular matrix.
For systems of the form \eqref{eq-LFS} where the stiffness only enters through the linear term
$Ly$ and not through $F(y)$, the first term on the right-hand side of
\eqref{eq-S} will not contribute any additional stiffness. To analyze
\eqref{eq-S} we first consider the case $F=0$, when it reduces to the
triangular system of equations
\begin{equation}
    \frac{d\yt_i}{dt}=\sum_{j=i+1}^{m}\St_{ij}\yt_j \text{ for }
    i=1,\dots,m-1
    \quad \text{ and } \quad \frac{d\yt_m}{dt}=0,
\end{equation}
which can be solved recursively to obtain the general solution as
a polynomial in~$t$. Recalling that stiffness
arises only when nearby solution curves approach the
solution curve of interest at exponentially fast rates, we deduce
that since polynomials cannot approach each other exponentially
fast, the system of equations is not stiff.
Such ODE systems can even be
solved exactly by a classical Runge--Kutta method whose degree
is greater than or equal to the degree of each of the solution polynomials.
By linear superposition, it follows that~\eqref{eq-S} is not stiff even
when~$F$ is linear and, in particular, when $F$ is constant.
That is, all of the linear stiffness in~\eqref{eq-DSF} is contained within
the diagonal term $DY$.

% Matrix exponential computational details

\section{Examples and applications}\label{applications}
Let us have a look at an example where the matrix $L$ is upper
triangular. First, we will solve the system by treating the full
linear term by an exponential RK method. Then, we will split $L$
into the sum of a diagonal matrix $D$
and a strictly upper triangular matrix $S$, allowing us to treat the
diagonal term exactly and the rest numerically. Consider the system
\begin{equation}
    \frac{dy}{dt} = -Ly= -Dy - Sy,
\end{equation}
where
\begin{equation}
D=-
\begin{bmatrix}
    a &0 &0\\
    0 &d &0\\
    0 &0 &f
\end{bmatrix},
S=-
\begin{bmatrix}
    0 &b &c\\
    0 &0 &e\\
    0 &0 &0
\end{bmatrix}
\end{equation}
and the eigenvalues $a$, $d$, and $f$ of $L$ are distinct.
The general solution of this system of ODEs is
\begin{equation}
\begin{bmatrix}
    y_1\\
    y_2\\
    y_3
\end{bmatrix}
=
\begin{bmatrix}
    k_1e^{-at} + \frac{bek_3e^{-ft}}{(f-a)(f-d)}
    + \frac{bk_2e^{-dt}}{d-a} + \frac{ck_3e^{-ft}}{f-a}\\
    k_2e^{-dt}+\frac{ek_3e^{-ft}}{f-d}\\
    k_3e^{-ft}
\end{bmatrix},
\end{equation}
where the constants $k_1$, $k_2$, and $k_3$ are fixed by the chosen
initial condition. This algebraic example allows us to run
many tests with ease, say

\begin{equation}
\label{eq-exampleS2}
L=-
\begin{bmatrix}
    1 &2 &7\\
    0 &75 &8\\
    0 &0 &15
\end{bmatrix},\quad
y(0)=
\begin{bmatrix}
    1\\
    1\\
    1
\end{bmatrix}.
\end{equation}
For this problem, we compare in Figure~\ref{fig-errors-S-smaller}
the error in~\Hlink{ERK4HO5}{ERK4HO5M}, which is a matrix
implementation of~\hlink{ERK4HO5}
\cite{Hochbruck05} and~\Hlink{ERK4HO5}{ERK4HO5V}, which is an implementation
of the same method with only the diagonal part treated exactly.
Here V stands for ``vector'' since the matrix in the linear term is
taken to be the diagonal matrix $D$ and M stands for ``matrix'' since
the matrix in the linear term is the complete matrix $L$.
The full matrix implementation~\Hlink{ERK4HO5}{ERK4HO5M} is supposed to solve the problem
exactly (since $F=0$)
but is still susceptible to floating point precision error.
Both exponential methods behave as expected
for large time steps. In comparison, we show how the classical
RK4 method fails at large~$h$.
This small system of ODEs
demonstrates the previous argument that stiffness is isolated to the
diagonal term $Dy$.

\begin{figure}[h]
    \centering
    \includegraphics[width=0.49\linewidth]{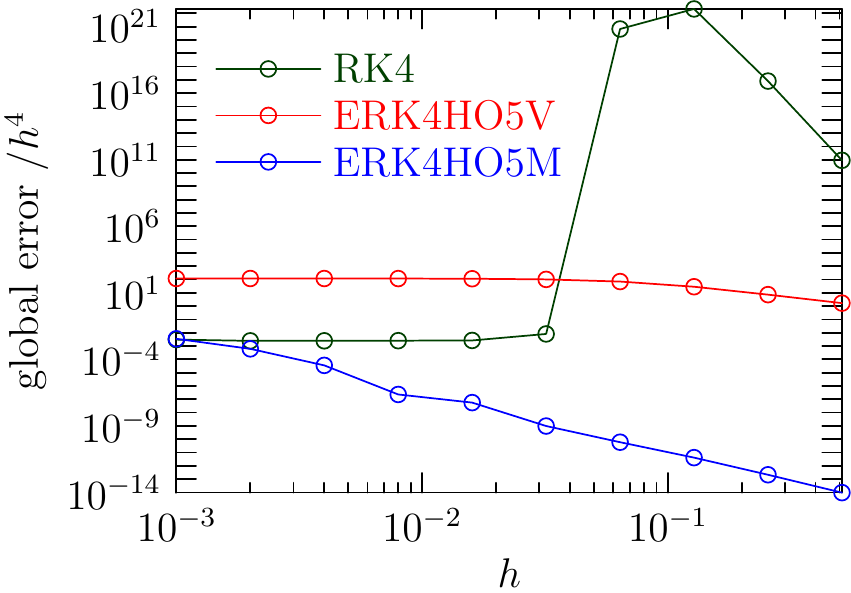}
    \caption{Error at $t=1$ when solving~\eqref{eq-exampleS2} with \protect\Hlink{ERK4HO5}{ERK4HO5M}, \protect\Hlink{ERK4HO5}{ERK4HO5V}, and RK4.}
\label{fig-errors-S-smaller}
\end{figure}

Consider Example 6.2 of \cite{Hochbruck05}:
\begin{equation}
    \label{eq-HO5}
    \frac{\partial y}{\partial t}(x,t)
    -\frac{\partial^2 y}{\partial x^2}(x,t)
    = \int_0^1 y(\bar x,t)\,d\bar x + \Phi(x,t),
\end{equation}
for $x\in [0,1]$ and $t\in [0,1]$, subject to homogeneous
Dirichlet boundary conditions, where the function $\Phi$ is chosen by
substituting the specified exact solution
\begin{equation}\label{eq-exact}
    y(x,t) = x(1-x)e^t
\end{equation}
into~\eqref{eq-HO5}.
This problem can be transformed to a system of ODEs by performing
a centered spatial discretization of the Laplacian and integral.
We approximate the integral with the Simpson method, which in this case
%particular integral in~\eqref{eq-HO5}
can be written as a matrix-vector multiplication.
This means it is a linear term and hence could be fused with the
linear term coming from
the discretized Laplacian. Therefore, all exponential integrators
could solve this problem exactly. Since treating numerically a
part of the equation that can be treated exactly is not a fair
comparison, we modify~\eqref{eq-HO5} to
\begin{equation}
    \label{eq-HO5-mod}
    \frac{\partial y}{\partial t}(x,t)
    -\frac{\partial^2 y}{\partial x^2}(x,t)
    = \int_0^1 y^4(\bar x,t)\,d\bar x + \Phi(x,t),
\end{equation}
where again the function $\Phi$ is calculated by
substituting~\eqref{eq-exact} in~\eqref{eq-HO5-mod}.
We discretized Problem \eqref{eq-HO5-mod} with $200$ spatial grid points.
As in \cite{Hochbruck05}, we calculate the matrix
$\ph_k$ functions with the help of Pad\'e approximants, along with scaling and
squaring.

In Figure~\ref{fig-error-ERK4HO5}, we plot the $L^2$ norm of the global
error at $t=1$ for
the full discretized Laplacian matrix formulation of \hlink{ERK4HO5}
and the optimized implementation where the Laplacian is first reduced to a
diagonal matrix via Schur decomposition.
Figure~\ref{fig-error-ERK43ZB} shows the same situation for the
fourth-order estimate of~\hlink{ERK43ZB}.
Figure~\ref{fig-error-ERK4K} and Figure~\ref{fig-error-ERK4CM}
emphasize that even when the~\hlink{ERK4K} and~\hlink{ERK4CM}
methods are applied to systems with a diagonal linear term, they can
still suffer from order reduction.
\begin{figure}[h]
\centering
\begin{subfigure}{0.49\linewidth}
    \centering
    \includegraphics[width=\linewidth]{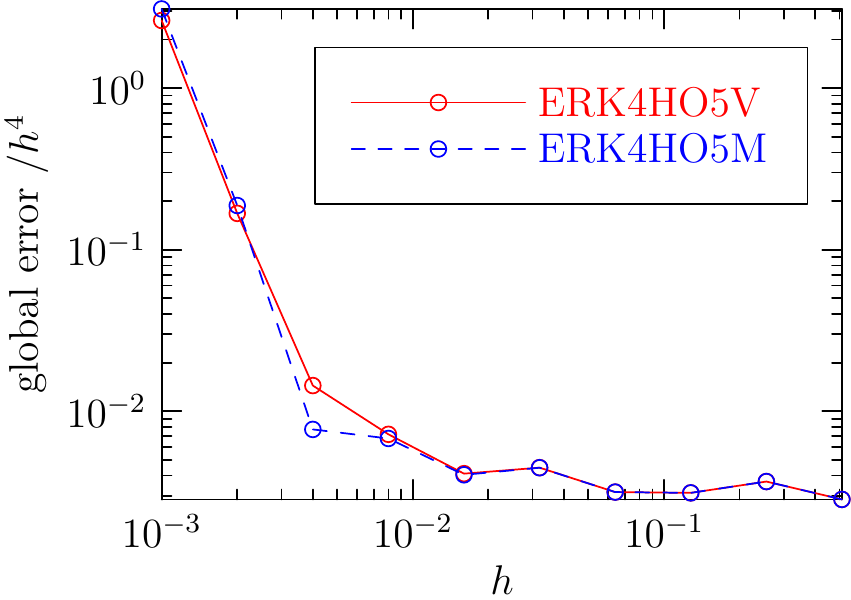}
    \caption{Error comparison for \protect\hlink{ERK4HO5}.}
    \label{fig-error-ERK4HO5}
\end{subfigure}%
\,
\begin{subfigure}{0.49\linewidth}
    \centering
    \includegraphics[width=\linewidth]{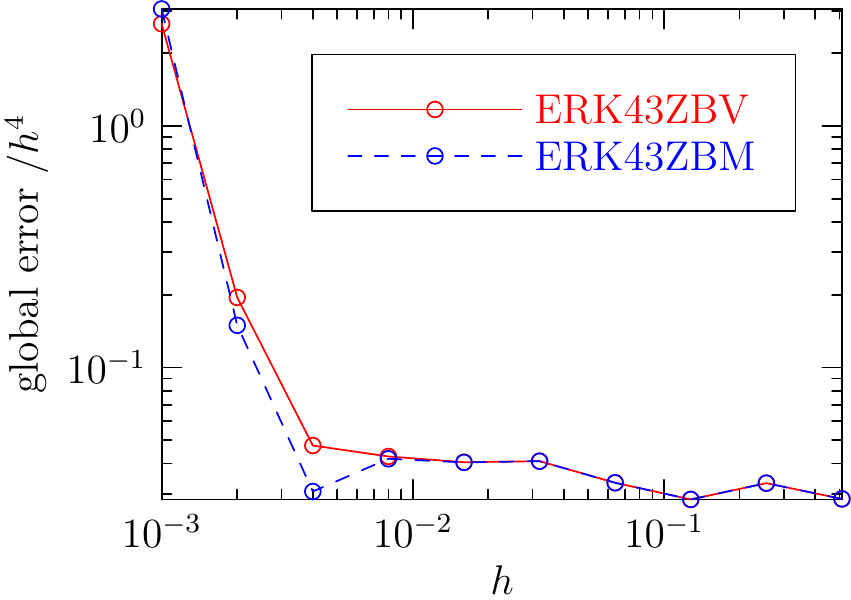}
    \caption{Error comparison for \protect\hlink{ERK43ZB}.}
    \label{fig-error-ERK43ZB}
\end{subfigure}%
\caption{Error comparison when solving~\eqref{eq-HO5-mod} with and without the Schur decomposition.}
\label{fig-error-VM}
\end{figure}

\begin{figure}[h]
\centering
\begin{subfigure}{0.49\linewidth}
    \centering
    \includegraphics[width=\linewidth]{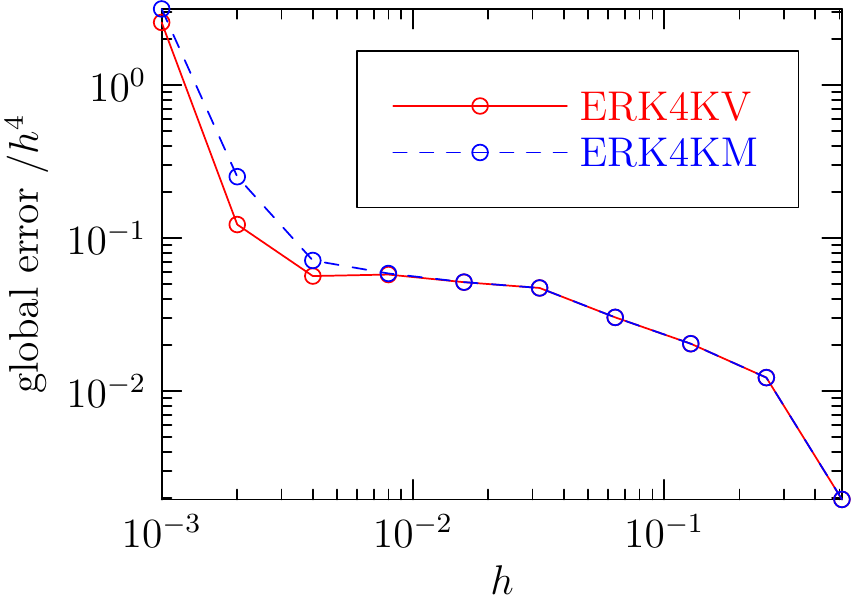}
    \caption{Error comparison for \protect\hlink{ERK4K}.}
    \label{fig-error-ERK4K}
\end{subfigure}%
\,
\begin{subfigure}{0.49\linewidth}
    \centering
    \includegraphics[width=\linewidth]{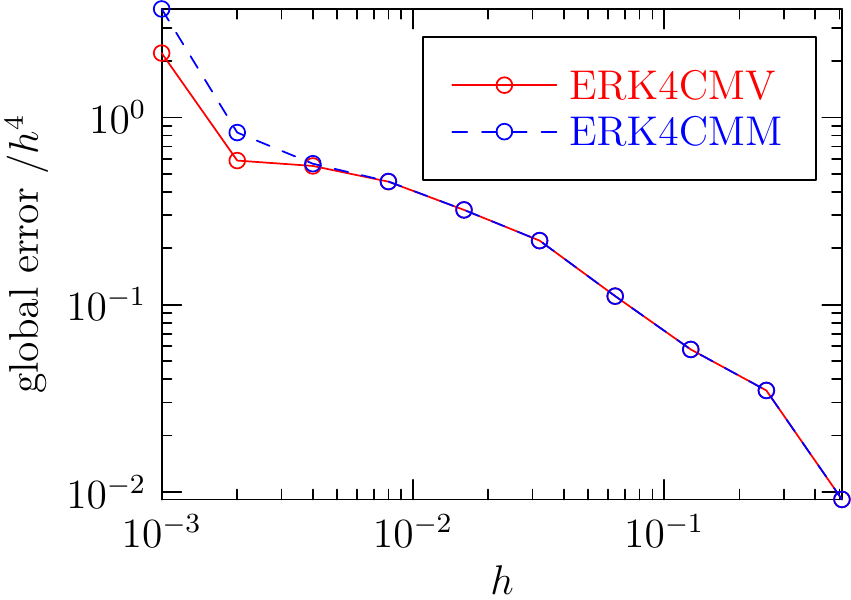}
    \caption{Error comparison for \protect\hlink{ERK4CM}.}
    \label{fig-error-ERK4CM}
\end{subfigure}%
\caption{Error comparison when solving~\eqref{eq-HO5-mod} with and without the Schur decomposition.}
\label{fig-error1-VM}
\end{figure}

To illustrate the impressive performance gain afforded by Schur decomposition,
we considered the equation
\begin{equation}
    \label{eq-HO-6-1}
    \frac{\partial y}{\partial t}(x,t)
    -\frac{\partial^2 y}{\partial x^2}(x,t)
    = \frac{1}{1+y(x,t)^2} + \Phi(x,t),
\end{equation}
with $\Phi$ chosen so that the exact solution
$y(x,t)=10(1-x)x(1+\sin t)+2$ is oscillatory rather than exponential.
We integrated~\eqref{eq-HO-6-1} from $t=0$ to $t=200$ using vector and
matrix formulations of~\hlink{ERK43ZB}, with $3000$ spatial points
and a fixed time step of~0.3. The vector formulation
\Hlink{ERK43ZB}{ERK43ZBV} was found to run about 117 times faster than
the matrix formulation \Hlink{ERK43ZB}{ERK43ZBM}, even after taking
into account the cost of the Schur decomposition, which required about
50\% of the total run time.
Since \hlink{ERK43ZB} is a robust embedded method, it is even more
meaningful to illustrate the practicality of Schur decomposition
using adaptive time stepping on a large problem. For $10\ 000$ spatial
points, Schur decomposition took only about 7\% of the total time required
to integrate~\eqref{eq-HO-6-1} from $t=0$ to $t=20\ 000$.

\section{Conclusion}
Previous attempts at defining stiffness in the literature are inadequate.
The quantitative definition of stiffness given in this work, which
compares local Lyapunov exponents to curvature, provides a solid
theoretical foundation for developing explicit numerical methods for stiff
problems.

Explicit ERK methods are ideally suited to
problems where the numerical stiffness comes from a linear term,
as they allow for relatively large step sizes.
Conventionally, ERK methods treat the linear term exactly.
However, in this work we show that in the case where the linear term
is a matrix, it is not necessary to treat the linear term exactly in
order to remove linear stiffness.

ERK methods treat the
linear term by calculating exponentials
and related functions of the linear coefficient $L$.
This is not a problem when $L$ is just a number or
if~$L$ is a diagonal matrix, but it is a computational burden when
$L$ is a general matrix (even if it is sparse).
The Schur decomposition of the general matrix can be used to transform
the linear coefficient to a triangular matrix.
The diagonal part of the new linear term is treated
exactly by the ERK method and the strictly triangular part is
treated explicitly, together with the nonlinear term.

Schur decomposition is particularly useful for embedded ERK
methods, because otherwise, every time that the step size is
adjusted, matrix functions would have to be recalculated.
With Schur decomposition, only functions of diagonal matrices
need to be recalculated at each time step.
Since the Schur decomposition algorithm only needs to be run
once at the very beginning, this greatly optimizes embedded
ERK methods and makes them a viable choice for high-performance
computing.

Lastly, we would like to remark that while we have defined
stiffness in general, exponential integrators can only
circumvent linear stiffness.
If there is stiffness associated with
the nonlinearity, one could (perhaps periodically) linearize
the equation around a certain state \cite{Hochbruck10}.
However, the matrix $L$ in the resulting linear part
would not in general be diagonal and we would have to perform a
Schur decomposition every time a linearization is performed. Another potential
improvement is to account for the off-diagonal terms of
the triangular matrix from the Schur decomposition by
using optimized algorithms for calculating functions of triangular
matrices.

\section*{Conflict of interest}
The authors have no competing interests to declare that are relevant to the content of this article.

\section*{Data availability}
All data generated or analyzed during this study is included in this published article.

\bibliographystyle{spmpsci}
\bibliography{refs}
\hypertarget{Bibliography}{}

\end{document}